\newtheorem{theorem}{Theorem}
\newtheorem{lemma}{Lemma}[section]
\newtheorem{remark}{Remark}[section]
\newtheorem{corollary}{Corollary}[section]
\newtheorem{proposition}{Proposition}[section]
\numberwithin{equation}{section}
\begin{document}
\title{Exponential inequalities for positive linear mappings}
\author{M. Sababheh, H. R. Moradi and S.Furuichi}
\subjclass[2010]{Primary 47A63, Secondary 46L05, 47A60.}
\keywords{positive linear maps, operator means, operator inequalities } \maketitle

\pagestyle{myheadings}
\markboth{\centerline {Exponential inequalities for positive linear mappings}}
{\centerline {}}
\bigskip
\bigskip

\begin{abstract}
In this article, we present exponential-type inequalities for positive linear mappings and Hilbert space operators, by means of convexity and the Mond-Pe\v cari\'c method. The obtained results refine and generalize some  known results. As an application, we present extensions for operator-like geometric and harmonic means. 
\end{abstract}

\section{Introduction}
Let $\mathcal{B}(\mathcal{H})$ be the $C^*$ algebra of bounded linear operators on a complex Hilbert space $\mathcal{H}.$ If $A\in\mathcal{B}(\mathcal{H})$ is  positive, we write   $A\ge 0.$ Further, we use the notation $\mathcal{B}^{+}(\mathcal{H})$ for the cone of all positive operators in $\mathcal{B}(\mathcal{H}).$ For two self-ajoint operators $A,B\in\mathcal{B}(\mathcal{H})$, we  write  $A\le B$ if $B-A\geq 0$. For a real-valued function $f$ of a real variable and a self-adjoint operator $A\in \mathcal{B}\left( \mathcal{H} \right)$, the value $f\left( A \right)$ is understood by means of the functional calculus.\\

Let $J$ be a real interval of any type. A continuous function $f:J\to \mathbb{R}$ is said to be operator convex if $f\left( \left( 1-v \right)A+vB \right)\le \left( 1-v \right)f\left( A \right)+vf\left( B \right)$ holds for each $v\in \left[ 0,1 \right]$ and every pair of self-adjoint operators $A,B\in \mathcal{B}\left( \mathcal{H} \right)$, with spectra in $J$.
The notation $\mathcal{B}_{[m,M]}^{+}(\mathcal{H})$ will be used in the sequel to denote the class of all positive operators $A\in\mathcal{B}(\mathcal{H})$ satisfying $mI\leq A\leq MI$, for some positive scalars $m$ and $M$.
A linear map $\varphi:\mathcal{B}(\mathcal{H})\to \mathcal{B}(\mathcal{K})$ is said to be positive if $\varphi(A)\geq 0$ whenever $A\geq 0$. If, in addition, $\varphi(I)=I,$ it is said to be normalized.

If $f:J\to\mathbb{R}$ is operator convex, then for any normalized positive linear map $\varphi$, we have \cite{choi,davis}
\begin{equation}\label{oper_conc_intro}
f\left( \varphi \left( A \right) \right)\le \varphi \left( f\left( A \right) \right),
\end{equation}
while we have the reversed inequality if  $f$ is operator concave, for any self-adjoint operator $A$ with spectrum in $J$.

The inequality \eqref{oper_conc_intro} is not true if $f$ is a convex function (rather than operator convex). However, in the interesting paper \cite{micic_pecaric_2000}, various complementary inequalities have been presented for convex and concave functions. For example, it is shown that for any positive number $\alpha$, one can find a constant $\beta$ such that
\begin{equation}\label{pecaric_intro}
\varphi(f(A))\leq \alpha f(\varphi(A))+\beta,
\end{equation}
for the twice differentiable convex function $f:[m,M]\to\mathbb{R}$ and any self-adjoint operator $A$ on $\mathcal{H}$ with spectrum in $[m,M].$\\

Earlier,  it has been shown that for any continuous real valued function $f$, one can find positive constants $\alpha$ and $\beta$ such that  \cite{li}
\begin{equation}\label{li_intro}
\alpha\;\varphi(f(A))\leq f(\varphi(A))\leq \beta\;\varphi(f(A)).
\end{equation}

In Propsoition \ref{2nd_prop_pos_map}, we present a special case  \eqref{pecaric_intro} for a particular choice of $\alpha$, however we present a simple proof for completeness. Then as an application, we present several improvements  and extensions of \eqref{pecaric_intro} and \eqref{li_intro}  for a log-convex function $f$.
 
In the sequel, we adopt the following notations. For a given function $f:[m,M]\to\mathbb{R}$, define
\begin{equation}\label{def_L}
L[m,M,f](t)=a[m,M,f]\;t+b[m,M,f],
\end{equation}
where
\[a[m,M,f]=\frac{f\left( M \right)-f\left( m \right)}{M-m}\quad\text{ and }\quad b[m,M,f]=\frac{Mf\left( m \right)-mf\left( M \right)}{M-m}.\]
If no confusion arises, we will simply write $a[m,M,f]=a_f$ and $ b[m,M,f]=b_f.$

Also, for $t_0\in(m,M),$ define
\begin{equation}\label{def_L'}
L'[t_0,f](t)=f(t_0)+f'(t_0)(t-t_0),
\end{equation}
provided that $f'(t_0)$ exists.

It is clear that for a convex function $f:[m,M]\to \mathbb{R}$, one has 
\begin{equation}\label{1st_ineq_for_conv}
L'[t_0,f](t)\leq f(t)\leq L[m,M,f](t),
\end{equation}
while the inequalities are reversed for a concave function $f$.

\begin{remark}\label{remark_t_0}
Notice that if $f$ is convex on an interval $J$ containing $[m,M]$, then \eqref{1st_ineq_for_conv} is still valid for any $t_0\in J$. That is, $t_0$ does not need to be in $(m,M).$
\end{remark}

Now, if $f:[m,M] \to \mathbb{R}^+$ is log-convex, we have the inequality $\log f(t)\leq L[ m,  M, \log f](t),$ which simply reads as follows 
\begin{equation}\label{1st_ineq_log_conv}
f(t)\leq \left(f^{t-m}(M)f^{M-t}(m)\right)^{\frac{1}{M-m}}\leq L[m,M,f](t), \,\, m\leq t\leq M,
\end{equation}
where the second inequality is due to the arithmetic-geometric inequality.
We refer the reader to \cite{hamid_lama} for some detailed discussion of \eqref{1st_ineq_log_conv}.

Another useful observations about log-convex functions is the following. If $f$ is log-convex on $[m,M]$ and if $t_0\in(m,M)$ is such that $f(t_0)\not=0,$ \eqref{1st_ineq_for_conv} implies
$$L'[t_0,g](t)\leq g,\;{\text{where}}\;g=\log f.$$
Simplifying this inequality implies the following.
\begin{lemma}\label{lemma_log_conv}
Let $f:[m,M]\to\mathbb{R}^+$ be log-convex. If $f$ is differentiable at $t_0\in(m,M)$, then
$$f(t)\geq f(t_0)\exp\left[\frac{f'(t_0)}{f(t_0)}(t-t_0)\right],  m\leq t\leq M.$$
\end{lemma}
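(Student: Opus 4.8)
The plan is to reduce the claim to the tangent line inequality for the convex function $g=\log f$ and then exponentiate, exactly as foreshadowed in the paragraph preceding the statement. Since $f:[m,M]\to\mathbb{R}^+$ is log-convex, the function $g=\log f$ is convex on $[m,M]$ by definition. Moreover, because $f$ is strictly positive and differentiable at $t_0$, the chain rule guarantees that $g$ is differentiable at $t_0$, with $g(t_0)=\log f(t_0)$ and $g'(t_0)=f'(t_0)/f(t_0)$. This is the only point that requires a little care: one must invoke the positivity of $f$ to ensure that both the logarithm and its derivative make sense at $t_0$.

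Next I would apply the left-hand inequality in \eqref{1st_ineq_for_conv} to the convex function $g$, namely $L'[t_0,g](t)\leq g(t)$ for all $t\in[m,M]$. Unwinding the definition \eqref{def_L'}, this reads
\[
g(t_0)+g'(t_0)(t-t_0)\leq g(t),
\]
which, after substituting the values of $g(t_0)$ and $g'(t_0)$ computed above, becomes
\[
\log f(t_0)+\frac{f'(t_0)}{f(t_0)}(t-t_0)\leq \log f(t).
\]

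Finally, since the exponential function is strictly increasing, applying $\exp$ to both sides preserves the inequality and yields
\[
f(t_0)\exp\left[\frac{f'(t_0)}{f(t_0)}(t-t_0)\right]\leq f(t),
\]
which is precisely the asserted bound. There is no genuine obstacle here; the argument is entirely a matter of transporting the convexity of $\log f$ through the exponential map, and the single subtlety, the differentiability of $\log f$ at $t_0$, is settled at once by the positivity hypothesis on $f$.
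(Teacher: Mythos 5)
Your proof is correct and follows essentially the same route as the paper: the paper likewise obtains the lemma by applying the tangent-line inequality $L'[t_0,g](t)\leq g(t)$ from \eqref{1st_ineq_for_conv} to the convex function $g=\log f$ and then simplifying (i.e., exponentiating). Your additional remarks on the differentiability of $\log f$ at $t_0$ via the positivity of $f$ merely make explicit what the paper leaves implicit.
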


In this article, we present several inequalities for log-convex functions based on the Mond-Pe\v cari\'c method. In particular, we present inequalities that can be viewed as exponential inequalities for log-convex functions. More precisely, we present inequalities among the quantities 
$$\varphi(f(A)), f(\varphi(A)),  \varphi \left( {{\left( {{f}^{A-m}}\left( M \right){{f}^{M-A}}\left( m \right) \right)}^{\frac{1}{M-m}}} \right)$$ and
$$ {{\left( {{f}^{\varphi \left( A \right)-m}}\left( M \right){{f}^{M-\varphi \left( A \right)}}\left( m \right) \right)}^{\frac{1}{M-m}}}.$$

Another interest in this paper is to present inequalities for operator-like means when filtered through normalized positive linear maps. That is, it is known that for an operator mean $\sigma$, one has \cite{mond}
$$\varphi(A\sigma B)\leq \varphi(A)\sigma \varphi(B), A,B\in\mathcal{B}^{+}(\mathcal{H}).$$
In particular, we show complementary inequalities for the geometric $\sharp_t$ and harmonic $!_t$ operator-like means, when $t<0.$ Of course, when $t<0$, these are not operator means. Our results can be considered as extensions of \cite[Theorem 2.2]{fujii}.

\section{Main Results}
Now we proceed to the main results, starting with a complementary result of \cite[Corollary 2.5]{2} and \cite[Proposition 2.1]{hamid_lama}.

\begin{proposition}\label{prop_1st_hamid}
Let $f:[m,M]\to\mathbb{R}^+$ be log-convex, $A\in\mathcal{B}_{[m,M]}^{+}(\mathcal{H})$ and $\varphi$ be a normalized positive linear map. Then
\[\begin{aligned}
 \frac{1}{\mu \left( m,M,f \right)}\varphi \left( f\left( A \right) \right)&\le \frac{1}{\mu \left( m,M,f \right)}\varphi \left( {{\left( {{f}^{A-m}}\left( M \right){{f}^{M-A}}\left( m \right) \right)}^{\frac{1}{M-m}}} \right) \\ 
& \le f\left( \varphi \left( A \right) \right) \\ 
& \le {{\left( {{f}^{\varphi \left( A \right)-m}}\left( M \right){{f}^{M-\varphi \left( A \right)}}\left( m \right) \right)}^{\frac{1}{M-m}}} \\ 
& \le \mu \left( m,M,f \right)\varphi \left( f\left( A \right) \right)  
\end{aligned}\]
where
\[\mu \left( m,M,f \right)\equiv \max \left\{ \frac{1}{f\left( t \right)}\left( \frac{M-t}{M-m}f\left( m \right)+\frac{t-m}{M-m}f\left( M \right) \right):\text{ }m\le t\le M \right\}.\]
\end{proposition}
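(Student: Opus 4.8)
The plan is to reduce the whole four-term chain to two pointwise scalar inequalities on $[m,M]$ and then transport them to operators by monotonicity of the functional calculus together with positivity of $\varphi$. Write $G(t)=\left(f^{t-m}(M)f^{M-t}(m)\right)^{\frac{1}{M-m}}$, so that the four quantities in the statement are $\varphi(f(A))$, $\varphi(G(A))$, $f(\varphi(A))$ and $G(\varphi(A))$. The two scalar ingredients are: the sandwich \eqref{1st_ineq_log_conv}, $f(t)\le G(t)\le L[m,M,f](t)$ for $m\le t\le M$; and the definition of $\mu(m,M,f)$, which upon dividing by $f(t)>0$ and maximizing gives $L[m,M,f](t)\le \mu(m,M,f)\,f(t)$ on $[m,M]$. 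Concatenating these produces the refined pointwise chain $f\le G\le L[m,M,f]\le \mu(m,M,f)\,f$ on $[m,M]$, which is the engine of the proof.

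Next I record two structural facts. Since $mI\le A\le MI$ and $\varphi$ is normalized and positive, one has $mI\le\varphi(A)\le MI$; thus both $A$ and $\varphi(A)$ have spectra in $[m,M]$, so any scalar inequality valid on $[m,M]$ yields an operator inequality after functional calculus applied to \emph{either} operator. Moreover $L[m,M,f]$ is affine, $L[m,M,f](t)=a_f t+b_f$, so normalization $\varphi(I)=I$ gives the commutation identity $\varphi\!\left(L[m,M,f](A)\right)=a_f\varphi(A)+b_f I=L[m,M,f](\varphi(A))$.

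With these in place, each of the four inequalities is one or two lines. For $\varphi(f(A))\le\varphi(G(A))$ (equivalently the first displayed inequality, after multiplying by $\tfrac{1}{\mu(m,M,f)}>0$), apply $f\le G$ via functional calculus at $A$ and then the monotone map $\varphi$. For $f(\varphi(A))\le G(\varphi(A))$, apply $f\le G$ directly at the operator $\varphi(A)$. For the central inequality $\tfrac{1}{\mu(m,M,f)}\varphi(G(A))\le f(\varphi(A))$, use $G(A)\le L[m,M,f](A)$, push it through $\varphi$ and commute to get $\varphi(G(A))\le L[m,M,f](\varphi(A))$, then close with $L[m,M,f](\varphi(A))\le\mu(m,M,f)\,f(\varphi(A))$ from the definition of $\mu$ at $\varphi(A)$. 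Finally, for $G(\varphi(A))\le\mu(m,M,f)\,\varphi(f(A))$, apply $G\le L[m,M,f]$ at $\varphi(A)$, commute to $\varphi\!\left(L[m,M,f](A)\right)$, and use $L[m,M,f](A)\le\mu(m,M,f)\,f(A)$ followed by $\varphi$.

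The steps are routine once the setup is fixed; the point deserving genuine care — and the one I would flag as the crux — is the affine commutation $\varphi\!\left(L[m,M,f](A)\right)=L[m,M,f](\varphi(A))$, since this is precisely where normalization enters and it is what allows a single scalar bound on $L[m,M,f]/f$ to control the two nonlinear quantities $\varphi(G(A))$ and $G(\varphi(A))$ at once. Everything else is monotonicity of the functional calculus and positivity of $\varphi$; in particular no operator convexity of $f$ is required, only that the governing inequalities hold pointwise on $[m,M]$.
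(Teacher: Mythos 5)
Your proof is correct and takes essentially the same route as the paper: the scalar sandwich \eqref{1st_ineq_log_conv} combined with the pointwise bound $L[m,M,f](t)\le \mu(m,M,f)\,f(t)$ on $[m,M]$, transported to operators by functional calculus at $A$ and at $\varphi(A)$, positivity of $\varphi$, and the affine commutation $\varphi(L[m,M,f](A))=L[m,M,f](\varphi(A))$ coming from normalization. The only difference is packaging: the paper imports the first two inequalities from \cite{hamid_lama} and derives $L[m,M,f](\varphi(A))\le\mu(m,M,f)\,\varphi(f(A))$ via the Mond--Pe\v cari\'c optimization with $\beta=0$ (which collapses to exactly your pointwise bound $L[m,M,f]\le\mu\,f$), whereas you prove the full four-term chain self-contained from the single scalar chain $f\le G\le L[m,M,f]\le\mu\,f$ --- a harmless and arguably cleaner presentation.
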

\begin{proof}
The first and the second inequalities follow from \cite[Proposition 2.1]{hamid_lama} and the fact that $\mu \left( m,M,f \right)>0$. So we have to prove the other inequalities. Applying a standard functional calculus argument  for the operator $\varphi \left( A \right)$ in \eqref{1st_ineq_log_conv}, we get
\begin{equation}\label{needed-referee}
f\left( \varphi \left( A \right) \right)\le {{\left( {{f}^{\varphi \left( A \right)-m}}\left( M \right){{f}^{M-\varphi \left( A \right)}}\left( m \right) \right)}^{\frac{1}{M-m}}}\le L\left[ m,M,f \right]\left( \varphi \left( A \right) \right).
\end{equation}

Following \cite{2}, we have for $\alpha>0$,
\begin{align*}
L\left[ m,M,f \right]\left( \varphi \left( A \right) \right)-\alpha\;\varphi(f(A))&=a_f\;\varphi(A)+b_f-\alpha\;\varphi(f(A))\\
&\leq \beta,
\end{align*}
where $\beta=\max\limits_{t\in[m,M]}\{a_f\;t+b_f-\alpha\;f(t)\}.$ That is,
$$L\left[ m,M,f \right]\left( \varphi \left( A \right) \right)\leq \alpha\;\varphi(f(A))+\beta.$$
By setting $\beta=0,$ we obtain $\alpha=\max\limits_{t\in[m,M]}\left\{\frac{a_ft+b_f}{f(t)}\right\}:=\mu(m,M,f).$ With this choice of $\alpha$, we have $L\left[ m,M,f \right]\left( \varphi \left( A \right) \right)\leq \mu(m,M,f)\;\varphi(f(A)),$ which, together with \eqref{needed-referee}, complete the proof.
\end{proof}

Notice that Proposition \ref{prop_1st_hamid} can be regarded as an operator extenstion of \cite[Theorem 2.5]{dragomir} and a refinement of \cite[Corollary 2.5]{2}.
\begin{corollary}\label{cor-2-1}
	Let $A\in\mathcal{B}_{[m,M]}^{+}(\mathcal{H})$ and $\varphi$ be a normalized positive linear map. Then, for $t<0$, 
\[\begin{aligned}
 \frac{1}{K\left( m,M,t \right)}\varphi \left( {{A}^{t}} \right)&\le \frac{1}{K\left( m,M,t \right)}\varphi \left( {{\left( {{M}^{A-m}}{{m}^{M-A}} \right)}^{\frac{t}{M-m}}} \right) \\ 
& \le \varphi {{\left( A \right)}^{t}} \\ 
& \le {{\left( {{M}^{\varphi \left( A \right)-m}}{{m}^{M-\varphi \left( A \right)}} \right)}^{\frac{t}{M-m}}} \\ 
& \le K\left( m,M,t \right)\varphi \left( {{A}^{t}} \right)  
\end{aligned}\]
where the generalized Kantrovich constant is defined by
$$K(m,M,t)=\frac{(mM^{t}-Mm^{t})}{(t-1)(M-m)}\left(\frac{t-1}{t}\frac{M^t-m^t}{mM^t-Mm^t}\right)^{t}.$$
\end{corollary}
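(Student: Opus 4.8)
The plan is to apply Proposition \ref{prop_1st_hamid} to the specific function $f(x)=x^t$ on $[m,M]$ and then to identify the resulting constant $\mu(m,M,f)$ with the generalized Kantorovich constant $K(m,M,t)$.

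First I would check that $f(x)=x^t$ is log-convex on $[m,M]\subset(0,\infty)$ when $t<0$. Indeed $\log f(x)=t\log x$, and since $x\mapsto\log x$ is concave and $t<0$, the product $t\log x$ is convex; hence $f$ is log-convex and maps into $\mathbb{R}^+$, so Proposition \ref{prop_1st_hamid} applies verbatim. The next routine step is to rewrite the four quantities appearing there for this choice of $f$: since $f^{A-m}(M)=M^{t(A-m)}$ and $f^{M-A}(m)=m^{t(M-A)}$, one gets
\[
\left(f^{A-m}(M)f^{M-A}(m)\right)^{\frac{1}{M-m}}=\left(M^{A-m}m^{M-A}\right)^{\frac{t}{M-m}},
\]
and likewise with $A$ replaced by $\varphi(A)$, while $f(A)=A^t$ and $f(\varphi(A))=\varphi(A)^t$. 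With these substitutions the chain of inequalities in Proposition \ref{prop_1st_hamid} becomes exactly the chain claimed in the corollary, with $\mu(m,M,f)$ in place of $K(m,M,t)$.

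The heart of the argument, and the only genuine computation, is therefore to show that
\[
\mu(m,M,x^t)=\max_{m\le x\le M}\frac{a_f x+b_f}{x^t}=K(m,M,t),
\]
where $a_f=\frac{M^t-m^t}{M-m}$ and $b_f=\frac{Mm^t-mM^t}{M-m}$. I would set $h(x)=a_f x^{1-t}+b_f x^{-t}$ and compute $h'(x)=x^{-t-1}\bigl(a_f(1-t)x-b_f t\bigr)$, whose unique positive zero is $x_0=\frac{b_f t}{a_f(1-t)}$. A short sign analysis (using that $t<0$ forces $a_f<0$ and $b_f>0$) shows that $h$ increases then decreases, so $x_0$ is the global maximum; one also checks that $x_0\in(m,M)$. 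Evaluating gives $h(x_0)=\frac{b_f}{1-t}\left(\frac{a_f(1-t)}{b_f t}\right)^t$, and substituting the values of $a_f,b_f$ simplifies this precisely to the stated expression for $K(m,M,t)$.

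The potential obstacle is purely bookkeeping: tracking signs carefully when $t<0$ (so that a maximum rather than a minimum is obtained, and so that the fractional powers are handled correctly), and confirming that the critical point $x_0$ lies in the open interval $(m,M)$, so that the extremum is interior rather than attained at an endpoint. Once the identity $\mu(m,M,x^t)=K(m,M,t)$ is established, the corollary follows immediately from Proposition \ref{prop_1st_hamid}.
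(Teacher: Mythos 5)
Your proposal is correct and follows the paper's proof exactly: the paper likewise obtains the corollary as an immediate application of Proposition \ref{prop_1st_hamid} with $f(x)=x^t$, merely omitting the verification that $\mu(m,M,x^t)=K(m,M,t)$. Your explicit maximization of $h(x)=a_f x^{1-t}+b_f x^{-t}$ at $x_0=\frac{b_f t}{a_f(1-t)}$, including the sign analysis for $t<0$, is accurate and fills in the computation the paper leaves to the reader.
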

\begin{proof}
The result follows immediately from Proposition \ref{prop_1st_hamid}, be letting $f(x)=x^t.$
\end{proof}

\begin{remark}
Corollary \ref{cor-2-1} presents a refinement of the corresponding result in \cite[Lemma 2]{01}. 
\end{remark}

As another application of Proposition \ref{prop_1st_hamid}, we have the following bounds for  operator means. To simplify our statement, we will adopt the following notations. For a given function $f:[m,M]\to[0,\infty)$ and two positive operators $A$ and $B$ satisfying $m\;A\leq B\leq M\;A$, we write
$$A\sigma_fB:=A^{\frac{1}{2}}f\left(A^{-\frac{1}{2}}BA^{-\frac{1}{2}}\right)A^{-\frac{1}{2}}\;{\text{and}}\;A\delta B=A^{-\frac{1}{2}}BA^{-\frac{1}{2}}.$$

\begin{corollary}\label{cor_means}
Let $A,B\in\mathcal{B}^{+}(\mathcal{H})$ be such that $m\;A\leq B\leq M\;A$ for some positive scalars $m,M.$ Then, for any linear map $\varphi$ (not necessarily normalized) and any log-convex function $f:[m,M]\to\mathbb{R}^+$,

\begin{align*}
 \frac{1}{\mu \left( m,M,f \right)}\varphi \left(A\sigma_fB\right)&\le \frac{1}{\mu \left( m,M,f \right)}\varphi \left(A^{\frac{1}{2}} {{\left( {{f}^{A\delta B-m}}\left( M \right){{f}^{M-A\delta B}}\left( m \right) \right)}^{\frac{1}{M-m}}}A^{\frac{1}{2}} \right) \\ 
& \le \varphi(A)\sigma_f\varphi(B) \\ 
& \le \varphi(A)^{\frac{1}{2}}{{\left( {{f}^{\varphi(A)\delta \varphi(B)-m}}\left( M \right){{f}^{M-\varphi(A)\delta \varphi(B)}}\left( m \right) \right)}^{\frac{1}{M-m}}}\varphi(A)^{\frac{1}{2}} \\ 
& \le \mu \left( m,M,f \right)\varphi(A\sigma_f B).
\end{align*}
\end{corollary}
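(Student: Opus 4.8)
The plan is to deduce the statement from Proposition~\ref{prop_1st_hamid} by a change of variable that simultaneously collapses the pair $A,B$ into a single operator lying in $\mathcal{B}_{[m,M]}^{+}(\mathcal{H})$ and replaces $\varphi$ by a \emph{normalized} positive linear map. First I would set $X:=A\delta B=A^{-1/2}BA^{-1/2}$. Conjugating the hypothesis $mA\le B\le MA$ by $A^{-1/2}$ gives $mI\le X\le MI$, so $X\in\mathcal{B}_{[m,M]}^{+}(\mathcal{H})$; in particular $f(X)$ and the log-convex interpolant $\left(f^{X-m}(M)f^{M-X}(m)\right)^{1/(M-m)}$ are well defined through the functional calculus.

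Next I would introduce the auxiliary map
\[
\Psi(Y):=\varphi(A)^{-1/2}\,\varphi\!\left(A^{1/2}YA^{1/2}\right)\varphi(A)^{-1/2},
\]
and verify that it is a normalized positive linear map. Linearity is clear; positivity follows because $Y\ge0$ forces $A^{1/2}YA^{1/2}\ge0$, hence $\varphi(A^{1/2}YA^{1/2})\ge0$ by positivity of $\varphi$, and conjugation by $\varphi(A)^{-1/2}$ preserves positivity; normalization is the identity $\Psi(I)=\varphi(A)^{-1/2}\varphi(A)\varphi(A)^{-1/2}=I$. This step quietly requires $\varphi(A)$ to be positive invertible, which is exactly what is needed for the means appearing in the conclusion to make sense.

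Then I would apply Proposition~\ref{prop_1st_hamid} to the operator $X$ and the map $\Psi$, obtaining the four-fold chain of operator inequalities among $\tfrac1\mu\Psi(f(X))$, $\tfrac1\mu\Psi\!\left((f^{X-m}(M)f^{M-X}(m))^{1/(M-m)}\right)$, $f(\Psi(X))$, $\left(f^{\Psi(X)-m}(M)f^{M-\Psi(X)}(m)\right)^{1/(M-m)}$, and $\mu\Psi(f(X))$, where $\mu=\mu(m,M,f)$. Finally I would conjugate the entire chain by $\varphi(A)^{1/2}$; since $P\mapsto\varphi(A)^{1/2}P\varphi(A)^{1/2}$ is order preserving the chain survives, and it remains to recognize each term. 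Using $A^{1/2}XA^{1/2}=B$, $\Psi(X)=\varphi(A)\delta\varphi(B)$, $\varphi(A)^{1/2}\Psi(f(X))\varphi(A)^{1/2}=\varphi(A\sigma_f B)$, and $\varphi(A)^{1/2}f(\Psi(X))\varphi(A)^{1/2}=\varphi(A)\sigma_f\varphi(B)$, the conjugated chain becomes precisely the asserted inequalities.

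The main obstacle is bookkeeping rather than any genuine difficulty: one must check that conjugating $\Psi(g(X))$ by $\varphi(A)^{1/2}$ reproduces $\varphi\!\left(A^{1/2}g(X)A^{1/2}\right)$ for both $g=f$ and the interpolant $g(X)=(f^{X-m}(M)f^{M-X}(m))^{1/(M-m)}$, so that the second and fourth terms match the statement verbatim, and that the scalar $\mu(m,M,f)$ is untouched by the construction since it depends only on $m$, $M$ and $f$. The only real hypothesis one must keep track of is the invertibility of $\varphi(A)$ underlying the definition of $\Psi$ and of the operator means.
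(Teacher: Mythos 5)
Your proposal is correct and follows essentially the same route as the paper's own proof: both pass to the operator $A\delta B\in\mathcal{B}_{[m,M]}^{+}(\mathcal{H})$, apply Proposition~\ref{prop_1st_hamid} with the normalized positive linear map $\psi(X)=\varphi(A)^{-\frac{1}{2}}\varphi\left(A^{\frac{1}{2}}XA^{\frac{1}{2}}\right)\varphi(A)^{-\frac{1}{2}}$, and conjugate the resulting chain by $\varphi(A)^{\frac{1}{2}}$. Your explicit flagging of the positivity of $\varphi$ and the invertibility of $\varphi(A)$ only makes precise hypotheses that the paper uses tacitly.
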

\begin{proof}
From the assumption $m\;A\leq B\leq M\;A$, we have $m\leq A\delta B:=A^{-\frac{1}{2}}BA^{-\frac{1}{2}}\leq M.$ Therefore, if $f$ is log-convex on $[m,M],$  Proposition \ref{prop_1st_hamid} implies
\[\begin{aligned}
 \frac{1}{\mu \left( m,M,f \right)}\psi \left( f\left( A\delta B \right) \right)&\le \frac{1}{\mu \left( m,M,f \right)}\psi \left( {{\left( {{f}^{A\delta B-m}}\left( M \right){{f}^{M-A\delta B}}\left( m \right) \right)}^{\frac{1}{M-m}}} \right) \\ 
& \le f\left( \psi \left( A\delta B \right) \right) \\ 
& \le {{\left( {{f}^{\psi \left( A\delta B \right)-m}}\left( M \right){{f}^{M-\psi \left( A\delta B \right)}}\left( m \right) \right)}^{\frac{1}{M-m}}} \\ 
& \le \mu \left( m,M,f \right)\psi \left( f\left( A\delta B \right) \right),
\end{aligned}\]
for any normalized positive linear map $\psi.$ In particular, for the given $\varphi,$  define $$\psi(X)=\varphi(A)^{-\frac{1}{2}}\varphi\left(A^{\frac{1}{2}}XA^{\frac{1}{2}}\right)\varphi(A)^{-\frac{1}{2}}.$$ Then,  $\psi$ is a normalized linear mapping and the above inequalities imply, upon conjugating with $\varphi(A)^{\frac{1}{2}},$ the desired inequalities.
\end{proof}
In particular, Corollary \ref{cor_means} can be utilized to obtain versions for the geometric and harmonic operator means, as follows.

\begin{corollary}\label{cor_geometric}
Let $A,B\in\mathcal{B}^{+}(\mathcal{H})$ be such that $m\;A\leq B\leq M\;A$ for some positive scalars $m,M.$ Then, for any linear map $\varphi$ (not necessarily normalized) and for $t<0,$
\begin{align*}
\frac{1}{K(m,M,t)}\varphi(A\sharp_t B)&\leq \frac{1}{K(m,M,t)}\varphi(A\sigma_gB)   \\
&\leq \varphi(A)\sharp_t\varphi(B)\\
&\leq \varphi(A)\sigma_g\varphi(B)\\
&\leq K(m,M,t)\varphi(A\sharp_tB),
\end{align*}
where
$g(x)=\left(M^{x-m}m^{M-x}\right)^{\frac{t}{M-m}}$ and $K(m,M,t)$ is as in Corollary \ref{cor-2-1}.
\end{corollary}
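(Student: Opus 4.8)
The plan is to derive everything from Corollary \ref{cor_means} by specializing to the log-convex function $f(x)=x^{t}$ on $[m,M]$. First I would check that $f(x)=x^{t}$ is indeed log-convex when $t<0$: writing $\log f(x)=t\log x$, we have $(\log f)''(x)=-t/x^{2}>0$ for $t<0$, so $\log f$ is convex and $f$ is log-convex. Thus Corollary \ref{cor_means} applies with this choice of $f$, and it only remains to translate each of the four expressions appearing there into the language of the operator-like geometric mean.

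Next I would identify the two extreme ``mean'' terms. By the definition $A\sigma_{f}B=A^{\frac{1}{2}}f(A\delta B)A^{\frac{1}{2}}$, with $f(x)=x^{t}$ we get $A\sigma_{f}B=A^{\frac{1}{2}}(A\delta B)^{t}A^{\frac{1}{2}}=A\sharp_{t}B$, and likewise $\varphi(A)\sigma_{f}\varphi(B)=\varphi(A)\sharp_{t}\varphi(B)$. This turns the first, third, and fifth terms of Corollary \ref{cor_means} into the corresponding $\sharp_{t}$ expressions in the statement.

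The key algebraic step is to rewrite the two middle (perspective-type) terms. Since $f(M)=M^{t}$ and $f(m)=m^{t}$, a direct computation of the exponents gives
\[
\left(f^{\,x-m}(M)\,f^{\,M-x}(m)\right)^{\frac{1}{M-m}}=\left(M^{x-m}m^{M-x}\right)^{\frac{t}{M-m}}=g(x).
\]
Applying this identity to $A\delta B$ (respectively $\varphi(A)\delta\varphi(B)$) and conjugating by $A^{\frac{1}{2}}$ (respectively $\varphi(A)^{\frac{1}{2}}$) converts the second and fourth terms of Corollary \ref{cor_means} into $\varphi\left(A^{\frac{1}{2}}g(A\delta B)A^{\frac{1}{2}}\right)=\varphi(A\sigma_{g}B)$ and $\varphi(A)^{\frac{1}{2}}g(\varphi(A)\delta\varphi(B))\varphi(A)^{\frac{1}{2}}=\varphi(A)\sigma_{g}\varphi(B)$, exactly as required.

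Finally, it remains to note that the constant $\mu(m,M,f)$ specializes to the generalized Kantorovich constant $K(m,M,t)$; this is precisely the maximization already carried out when passing from Proposition \ref{prop_1st_hamid} to Corollary \ref{cor-2-1}, so I would simply invoke $\mu(m,M,x^{t})=K(m,M,t)$ from there. With all five terms rewritten, the chain of inequalities is immediate from Corollary \ref{cor_means}. I expect no genuine obstacle beyond the careful exponent bookkeeping in the displayed identity, which is the one place an arithmetic slip could occur; every other step is a direct substitution into the already-established Corollary \ref{cor_means}.
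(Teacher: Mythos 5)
Your proposal is correct and follows exactly the paper's route: the paper's proof is a one-line direct application of Corollary \ref{cor_means} with the log-convex function $f(x)=x^{t}$, $t<0$, with the identifications $A\sigma_{f}B=A\sharp_{t}B$ and $\mu(m,M,x^{t})=K(m,M,t)$ left implicit. You have merely made explicit the exponent bookkeeping and the specialization of $\mu$ to the Kantorovich constant, which the paper delegates to Corollary \ref{cor-2-1}.
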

\begin{proof}
Noting that the function $f(x)=x^{t}$ is log-convex on $[m,M]$ for $t<0$, the result follows by direct application of Corollary \ref{cor_means}.
\end{proof}
\begin{remark}
	Recently in \cite[Theorem 2.2]{fujii} the authors proved that if $A,B$ are two positive operators, then
\[\varphi \left( A \right){{\sharp}_{t}}\varphi \left( B \right)\le \varphi \left( A{{\sharp}_{t}}B \right)\text{ }t\in \left[ -1,0 \right).\]
Therefore, Corollary \ref{cor_geometric} can be regarded as an extension and a reverse for the above inequality, under the assumption $mA \le B \le MA$ with $M \ge m >0$.
\end{remark}
\begin{corollary}
Let $A,B\in\mathcal{B}^{+}(\mathcal{H})$ be such that $m\;A\leq B\leq M\;A$ for some positive scalars $m,M\geq 1.$ Then, for any linear map $\varphi$ (not necessarily normalized) and for $t<0,$
\begin{align*}
\frac{1}{H(m,M,t)}\varphi(A!_t B)&\leq \frac{1}{H(m,M,t)}\varphi(A\sigma_gB)   \\
&\leq \varphi(A)!_t\varphi(B)\\
&\leq \varphi(A)\sigma_g\varphi(B)\\
&\leq H(m,M,t)\;\varphi(A!_tB),
\end{align*}
where $g(x)=\left(\left(1!_{t}M\right)^{x-m}\left(1!_{t}m\right)^{M-x}\right)^{\frac{1}{M-m}}$ and
$$H(m,M,t)=\left[(1-t)^2+\frac{t}{mM}\left(2(1-t)\sqrt{m M}+t\right)\right](1!_tm)(1!_tM).$$
\end{corollary}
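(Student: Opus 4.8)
The plan is to mirror the proof of Corollary \ref{cor_geometric}, identifying the correct log-convex representing function for the harmonic operator-like mean and then invoking Corollary \ref{cor_means}. The weighted harmonic mean is generated by $f(x)=1!_tx=\dfrac{x}{(1-t)x+t}$, in the sense that $A!_tB=A\sigma_fB$; this is exactly the $f$ whose boundary values $f(m)=1!_tm$ and $f(M)=1!_tM$ appear in the statement. The first thing to verify is that this $f$ is admissible, i.e. a strictly positive, log-convex function on $[m,M]$. Writing $D_x=(1-t)x+t$, the hypothesis $m\geq 1$ (so that $x\geq 1>\frac{-t}{1-t}$ for $t<0$) guarantees $D_x>0$, hence $f>0$ and $f$ is differentiable there. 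A direct computation gives $(\log f)''(x)=\dfrac{(1-t)^2}{D_x^2}-\dfrac{1}{x^2}$, and since for $t<0$ both $(1-t)x$ and $D_x$ are positive with $(1-t)x\geq (1-t)x+t=D_x$, we obtain $(\log f)''\geq 0$; thus $f$ is log-convex.

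With log-convexity established, I would apply Corollary \ref{cor_means} to this $f$. The bound function it produces is $\left(f(M)^{x-m}f(m)^{M-x}\right)^{1/(M-m)}=\left((1!_tM)^{x-m}(1!_tm)^{M-x}\right)^{1/(M-m)}$, which is precisely the stated $g$, and the outer constant is $\mu(m,M,f)$. Consequently the entire four-term chain follows at once, and it only remains to identify $\mu(m,M,f)$ with $H(m,M,t)$.

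This last identification is the only real work, namely evaluating the Mond-Pe\v cari\'c constant $\mu(m,M,f)=\max_{m\leq x\leq M}\frac{L[m,M,f](x)}{f(x)}$. The computation is made tractable by the observation that $\frac{1}{f(x)}=(1-t)+\frac{t}{x}$ is affine in $\frac1x$, so that $\psi(x):=\frac{a_fx+b_f}{f(x)}$ is a combination of the terms $x$, $1$ and $\frac1x$. Its derivative $\psi'(x)=a_f(1-t)-\frac{b_ft}{x^2}$ vanishes at $x^{\ast}$ with $x^{\ast 2}=\frac{b_ft}{a_f(1-t)}$, and using the explicit values $a_f=\frac{t}{D_mD_M}$ and $b_f=\frac{mM(1-t)}{D_mD_M}$ (obtained from $f(M)-f(m)=\frac{t(M-m)}{D_mD_M}$ and $Mf(m)-mf(M)=\frac{mM(1-t)(M-m)}{D_mD_M}$) one finds the pleasant simplification $x^{\ast}=\sqrt{mM}\in[m,M]$. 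Since $\psi''(x)=\frac{2b_ft}{x^3}<0$ for $t<0$, this critical point is the global maximum on $[m,M]$.

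Finally, substituting $x^{\ast}=\sqrt{mM}$ directly into $\psi$ avoids any sign subtlety: the two terms $a_f(1-t)\sqrt{mM}$ and $\frac{b_ft}{\sqrt{mM}}$ coincide, and collecting everything over $D_mD_M$ collapses the numerator to a perfect square, giving $\mu(m,M,f)=\psi(\sqrt{mM})=\dfrac{\left((1-t)\sqrt{mM}+t\right)^2}{D_mD_M}$. Recognizing $(1!_tm)(1!_tM)=\frac{mM}{D_mD_M}$ together with $\frac{\left((1-t)\sqrt{mM}+t\right)^2}{mM}=(1-t)^2+\frac{t}{mM}\left(2(1-t)\sqrt{mM}+t\right)$ rewrites this exactly as $H(m,M,t)$, completing the proof. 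I expect the algebraic simplification of $\mu$ and the recognition of the perfect square to be the main obstacle; everything else is a routine substitution into Corollary \ref{cor_means}.
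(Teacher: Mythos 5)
Your proposal is correct and takes essentially the same route as the paper, whose proof is the one-line observation that $f(x)=\left(1-t+t\,x^{-1}\right)^{-1}$ is log-convex on $[m,M]$ for $t<0$ when $m\geq 1$, followed by a direct application of Corollary \ref{cor_means}. The only difference is that you carry out explicitly what the paper leaves implicit — the verification $(\log f)''\geq 0$ and the evaluation $\mu(m,M,f)=\frac{\left((1-t)\sqrt{mM}+t\right)^2}{D_mD_M}=H(m,M,t)$ via the critical point $x^{\ast}=\sqrt{mM}$ — and these computations are accurate.
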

\begin{proof}
Noting that the function $f(x)=(1-t+t\;x^{-1})^{-1}$ is log-convex on $[m,M]$ for $t<0$, provided that $m\geq 1,$ the result follows by direct application of Corollary \ref{cor_means}.
\end{proof}
We should remark that the mapping $t\mapsto H(m,M,t)$ is a decreasing function for $t<0.$ In particular, $$H(m,M,0)=1\leq H(m,M,t)\leq \frac{(\sqrt{mM}-1)^2}{(m-1)(M-1)}=\lim_{t\to-\infty}H(m,M,t), \forall \;t<0.$$

Further, utilizing \eqref{1st_ineq_log_conv}, we obtain the following. In this result and later in the paper, we adopt the notations:
$$\alpha(f,t_0)=\frac{a_f}{f'(t_0)}\;{\text{and}}\;\beta(f,t_0)=a_ft_0+b_f-\frac{a_f\;f(t_0)}{f'(t_0)}.$$

The following Proposition gives a simplified special case of \cite[Theorem 2.1]{micic_pecaric_2000}.
\begin{proposition}\label{2nd_prop_pos_map}
Let $f:[m,M]\to\mathbb{R}^+$ be convex, $A\in\mathcal{B}_{[m,M]}^{+}(\mathcal{H})$ and $\varphi$ be a normalized positive linear map. If $f$ is either increasing or decreasing on $[m,M],$ then for any $t_0\in (m,M),$
\begin{equation}\label{2ndd_ineq_pos_map}
\varphi(f(A))\leq \alpha(f,t_0)f(\varphi(A))+\beta(f,t_0),
\end{equation}
and
\begin{equation}\label{22nd_ineq_pos_map}
f(\varphi(A))\leq \alpha(f,t_0)\varphi(f(A))+\beta(f,t_0),
\end{equation}
provided that $f'(t_0)$ exists and $f'(t_0)\not=0.$ Further, both inequalities are reversed if $f$ is concave.
\end{proposition}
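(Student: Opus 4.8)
The plan is to sandwich $f$ between the tangent line $L'[t_0,f]$ and the chord $L[m,M,f]$ via \eqref{1st_ineq_for_conv}, apply the functional calculus to the operators $A$ and $\varphi(A)$, and then propagate the resulting scalar inequalities through the normalized positive linear map $\varphi$. I first note that since $mI\le A\le MI$ and $\varphi(I)=I$, the operator $\varphi(A)$ also satisfies $mI\le\varphi(A)\le MI$; hence both $A$ and $\varphi(A)$ have spectra in $[m,M]$, and all the scalar bounds of \eqref{1st_ineq_for_conv} read off as operator inequalities for these two operators.

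For \eqref{2ndd_ineq_pos_map} I would start from the upper (chord) bound in \eqref{1st_ineq_for_conv}, namely $f(t)\le a_f t+b_f$. Applying the functional calculus to $A$ and then $\varphi$ gives $\varphi(f(A))\le a_f\,\varphi(A)+b_f$. Separately, the lower (tangent) bound $f(t_0)+f'(t_0)(t-t_0)\le f(t)$ applied to $\varphi(A)$ yields $f(t_0)I+f'(t_0)\big(\varphi(A)-t_0 I\big)\le f(\varphi(A))$. Multiplying this last inequality by $\alpha(f,t_0)=a_f/f'(t_0)$ and rearranging converts $a_f\,\varphi(A)+b_f$ into $\alpha(f,t_0)f(\varphi(A))+\beta(f,t_0)$, the additive constant being exactly $a_f t_0+b_f-\alpha(f,t_0)f(t_0)=\beta(f,t_0)$; chaining the two estimates gives the claim.

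Inequality \eqref{22nd_ineq_pos_map} is obtained by swapping the roles of the two bounds: I apply the chord bound to $\varphi(A)$ to get $f(\varphi(A))\le a_f\,\varphi(A)+b_f$, and apply the tangent bound to $A$ and push it through $\varphi$ to get $f(t_0)I+f'(t_0)\big(\varphi(A)-t_0I\big)\le\varphi(f(A))$. Multiplying the latter by $\alpha(f,t_0)$ and substituting into the former again collapses to $f(\varphi(A))\le\alpha(f,t_0)\varphi(f(A))+\beta(f,t_0)$.

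The only delicate point, and the reason the monotonicity hypothesis is imposed, is the sign of $\alpha(f,t_0)$: multiplying an operator inequality by this scalar preserves its direction only when $\alpha(f,t_0)>0$. Monotonicity of $f$ forces $f'(t_0)$ and the average slope $a_f=\big(f(M)-f(m)\big)/(M-m)$ to share a sign (both positive if $f$ is increasing, both negative if decreasing), so $\alpha(f,t_0)>0$ in either case. For concave $f$ the chord lies below and the tangent above, so both bounds in \eqref{1st_ineq_for_conv} reverse; since $\alpha(f,t_0)>0$ still holds, the identical manipulations reverse both conclusions, yielding the stated concave versions. (By Remark \ref{remark_t_0} one could even relax the requirement $t_0\in(m,M)$, provided $f$ is convex on a slightly larger interval and $f'(t_0)$ exists.)
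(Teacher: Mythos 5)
Your proposal is correct and takes essentially the same route as the paper's proof: both arguments sandwich $f$ between the tangent bound $L'[t_0,f]$ and the chord bound $L[m,M,f]$ from \eqref{1st_ineq_for_conv}, apply the functional calculus once at $A$ (pushed through $\varphi$) and once at $\varphi(A)$, and then chain the two estimates using the monotonicity hypothesis to guarantee $\alpha(f,t_0)=a_f/f'(t_0)>0$, which is exactly the paper's observation that $a_ff'(t_0)>0$. The only cosmetic difference is in the concave case, where you reverse both scalar bounds directly while the paper substitutes $-f$ and invokes linearity of $\varphi$; the two are equivalent.
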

\begin{proof}
We give the proof for the reader's convenience.
Notice first that $f$ being either increasing or decreasing assures that $a_ff'(t_0)>0.$
Using a standard functional calculus in \eqref{1st_ineq_for_conv} with $t=A$ and applying $\varphi$ to both sides imply
\begin{equation}\label{needed_ineq_1}
f(t_0)+f'(t_0)(\varphi(A)-t_0)\leq \varphi(f(A))\leq a_f\varphi(A)+b_f.
\end{equation} 
On the other hand, applying the functional calculus argument with $t=\varphi(A)$ implies
\begin{equation}\label{needed_ineq_2}
f(t_0)+f'(t_0)(\varphi(A)-t_0)\leq f(\varphi(A))\leq a_f\varphi(A)+b_f.
\end{equation}
Noting that $a_f$ and $f'(t_0)$ have the same sign, both desired inequalities follow from \eqref{needed_ineq_1} and \eqref{needed_ineq_2}.

Now if $f$ was concave, replacing $f$ with $-f$ and noting linearity of $\varphi$ imply the desired inequalities for a concave function.
\end{proof}

As an application, we present the following result, which has been shown in  \cite[Corollary 2.8]{micic_pecaric_2000}. 
\begin{corollary}\label{cor_inverse}
Let $A\in\mathcal{B}_{[m,M]}^{+}(\mathcal{H})$. Then, for a normalized positive linear mapping $\varphi,$
\begin{equation}\label{additive_reverse_inverse}
\varphi(A^{-1})\leq (\varphi(A))^{-1}+\left(\frac{1}{\sqrt{m}}-\frac{1}{\sqrt{M}}\right)^{2}
\end{equation}
and
\begin{equation}\label{multi_reverse_inverse}
\varphi(A^{-1})\leq \frac{(M+m)^2}{4mM}(\varphi(A))^{-1}.
\end{equation}
\end{corollary}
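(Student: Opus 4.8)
The plan is to apply Proposition~\ref{2nd_prop_pos_map} to the function $f(x)=x^{-1}$ on $[m,M]$. First I would verify the hypotheses: since $m>0$, the function $f$ maps $[m,M]$ into $\mathbb{R}^+$, $f''(x)=2x^{-3}>0$ so $f$ is convex, and $f'(x)=-x^{-2}<0$ so $f$ is strictly decreasing. Hence the Proposition applies, with $f'(t_0)=-t_0^{-2}\ne 0$ for every $t_0\in(m,M)$, and inequality \eqref{2ndd_ineq_pos_map} is available.

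Next I would compute the relevant constants. A direct calculation gives $a_f=-\frac{1}{mM}$ and $b_f=\frac{m+M}{mM}$, and therefore, using $f(t_0)=t_0^{-1}$,
$$\alpha(f,t_0)=\frac{a_f}{f'(t_0)}=\frac{t_0^2}{mM},\qquad \beta(f,t_0)=a_ft_0+b_f-\frac{a_ff(t_0)}{f'(t_0)}=\frac{m+M-2t_0}{mM}.$$
Substituting these into \eqref{2ndd_ineq_pos_map} with $A^{-1}=f(A)$ produces the single master inequality
$$\varphi(A^{-1})\le \frac{t_0^2}{mM}(\varphi(A))^{-1}+\frac{m+M-2t_0}{mM},\qquad t_0\in(m,M).$$

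The two claimed inequalities then arise from two natural choices of the free parameter $t_0$. For \eqref{additive_reverse_inverse} I would force the multiplicative coefficient to be $1$, i.e. set $\alpha(f,t_0)=1$, which gives $t_0=\sqrt{mM}$; the additive constant then collapses to $\beta=\frac{m+M-2\sqrt{mM}}{mM}=\left(\frac{1}{\sqrt m}-\frac{1}{\sqrt M}\right)^2$, yielding the first inequality. For \eqref{multi_reverse_inverse} I would instead annihilate the additive constant, i.e. set $\beta(f,t_0)=0$, which gives $t_0=\frac{m+M}{2}$; then $\alpha=\frac{(m+M)^2}{4mM}$, exactly the stated multiplicative constant.

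I do not expect a serious obstacle here, since the result is a clean specialization of Proposition~\ref{2nd_prop_pos_map}. The only points requiring care are the bookkeeping for $a_f,b_f,\alpha,\beta$ and the observation that the two optimizing values of $t_0$ are precisely the geometric mean $\sqrt{mM}$ (which decouples the \emph{additive} reverse) and the arithmetic mean $\frac{m+M}{2}$ (which decouples the \emph{multiplicative} reverse).
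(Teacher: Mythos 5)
Your proposal is correct and follows exactly the paper's own route: specialize Proposition \ref{2nd_prop_pos_map} (inequality \eqref{2ndd_ineq_pos_map}) to $f(t)=t^{-1}$, taking $t_0=\sqrt{mM}$ to get $\alpha=1$ and $\beta=\left(\frac{1}{\sqrt{m}}-\frac{1}{\sqrt{M}}\right)^{2}$ for \eqref{additive_reverse_inverse}, and $t_0=\frac{m+M}{2}$ to get $\beta=0$ and $\alpha=\frac{(M+m)^2}{4mM}$ for \eqref{multi_reverse_inverse}. The only difference is that you write out the computations of $a_f$, $b_f$, $\alpha(f,t_0)$, $\beta(f,t_0)$ that the paper compresses into ``direct calculations,'' and all of them check out.
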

\begin{proof}
Let $f(t)=t^{-1}.$ Then $f$ is convex and monotone on $[m,M].$ Letting $t_0=\sqrt{mM}\in(m,M),$ direct calculations show that $\alpha(f,t_0)=1, \beta(f,t_0)=\left(\frac{1}{\sqrt{m}}-\frac{1}{\sqrt{M}}\right)^{2}.$ Then inequality \eqref{2ndd_ineq_pos_map} implies the first inequality. The second inequality follows simlarly by letting $t_0=\frac{m+M}{2}.$
\end{proof}

Manipulating Proposition \ref{2nd_prop_pos_map} implies several extensions for log-convex functions, as we shall see next.

 We will adopt the following constants in Theorem \ref{theorem_1}.\\ 
 $a_h=a[m,M,h], b_h=b[m,M,h], \alpha=\alpha(h,t_0), \beta=\beta(h,t_0)$ for $h(t)=\left(f^{t-m}(M)f^{M-t}(m)\right)^{\frac{1}{M-m}}$ and $a_{h_1}=a[f^{M-m}(m),f^{M-m}(M),h_1], b_{h_1}=b[f^{M-m}(m),f^{M-m}(M),h_1], \alpha_1=\alpha(h_1,t_1)$ and $\beta_1=\beta(h_1,t_1)$ for $h_1(t)=t^{\frac{1}{M-m}}.$

 The first two inequalities of the next result should be compared with Proposition \ref{prop_1st_hamid}; where a reverse-type is presented now.
\begin{theorem}\label{theorem_1}
Let $f:[m,M]\to\mathbb{R}^+$ be log-convex, $A\in\mathcal{B}_{[m,M]}^{+}(\mathcal{H})$ and $\varphi$ be a normalized positive linear map. Then for any $t_0,t_1\in (m,M),$
\begin{align*}
f(\varphi(A))&\leq \left(f^{\varphi(A)-m}(M)f^{M-\varphi(A)}(m)\right)^{\frac{1}{M-m}}\\
&\leq \alpha\;\varphi\left(f^{\frac{A-m}{M-m}}(M)f^{\frac{M-A}{M-m}}(m)\right)+\beta\\
&\leq \left\{\begin{array}{cc}\alpha\;\left(\varphi\left(f^{A-m}(M)f^{M-A}(m)\right)\right)^{\frac{1}{M-m}}+\beta,&M-m\geq 1 \\
\alpha\;\alpha_1\left(\varphi\left(f^{A-m}(M)f^{M-A}(m)\right)\right)^{\frac{1}{M-m}}+\alpha\beta_1+\beta,&M-m<1\end{array}\right..
\end{align*}
\end{theorem}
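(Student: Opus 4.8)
The plan is to recognize all three inequalities as consequences of the single operator identity $h(A)=\bigl(f^{A-m}(M)f^{M-A}(m)\bigr)^{1/(M-m)}$, where $h(t)=\bigl(f^{t-m}(M)f^{M-t}(m)\bigr)^{1/(M-m)}$ is the geometric-interpolation function appearing in the statement, and to exploit the structural fact that $h$ is an exponential in $t$. Indeed, writing out $\log h(t)$ shows it is affine in $t$, so $h(t)=C\rho^{t}$ with $\rho=\bigl(f(M)/f(m)\bigr)^{1/(M-m)}$; consequently $h$ is convex and strictly monotone on $[m,M]$ (this uses $f(m)\neq f(M)$, which is exactly what makes $h'(t_0)\neq 0$ and renders $\alpha=\alpha(h,t_0)$ well defined). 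This convexity and monotonicity is precisely the hypothesis needed to feed $h$ into Proposition \ref{2nd_prop_pos_map}.

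For the first inequality, I would apply the functional calculus to the operator $\varphi(A)$ in the left-hand inequality of \eqref{1st_ineq_log_conv}; this is identical to the first estimate in \eqref{needed-referee} and requires no new argument. For the second inequality, since the middle term equals $h(\varphi(A))$ and the third integrand equals $\varphi(h(A))$, the claim $h(\varphi(A))\leq\alpha\,\varphi(h(A))+\beta$ is exactly inequality \eqref{22nd_ineq_pos_map} of Proposition \ref{2nd_prop_pos_map} applied to the convex monotone function $h$ at the point $t_0$, after noting $\alpha(h,t_0)=\alpha$ and $\beta(h,t_0)=\beta$. The required sign $\alpha>0$ follows because monotonicity of $h$ forces $a_h$ and $h'(t_0)$ to share their sign.

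The real content is the third, branching inequality, where I must control $\varphi(h(A))=\varphi\bigl(W^{1/(M-m)}\bigr)$ with $W:=f^{A-m}(M)f^{M-A}(m)$ a positive operator whose spectrum lies in $[f^{M-m}(m),f^{M-m}(M)]$; this follows by evaluating the scalar $w(x)=f(M)^{x-m}f(m)^{M-x}$ at the endpoints $x=m$ and $x=M$. When $M-m\geq 1$ the exponent $1/(M-m)\in(0,1]$, so $t\mapsto t^{1/(M-m)}$ is operator concave and the reversed form of \eqref{oper_conc_intro} gives $\varphi\bigl(W^{1/(M-m)}\bigr)\leq\bigl(\varphi(W)\bigr)^{1/(M-m)}$; multiplying by $\alpha>0$ and adding $\beta$ yields the first branch. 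When $M-m<1$ the exponent exceeds $1$ and operator concavity is lost, which is where the main obstacle sits: instead I would invoke Proposition \ref{2nd_prop_pos_map}, inequality \eqref{2ndd_ineq_pos_map}, a second time, now for the convex increasing function $h_1(t)=t^{1/(M-m)}$ on the interval $[f^{M-m}(m),f^{M-m}(M)]$, the operator $W$, and the point $t_1$, obtaining $\varphi\bigl(W^{1/(M-m)}\bigr)\leq\alpha_1\bigl(\varphi(W)\bigr)^{1/(M-m)}+\beta_1$. Substituting into $\alpha\,\varphi(h(A))+\beta$ and using $\alpha>0$ produces the constants $\alpha\alpha_1$ and $\alpha\beta_1+\beta$ of the second branch.

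The step I expect to demand the most care is the $M-m<1$ branch: making Proposition \ref{2nd_prop_pos_map} applicable to $h_1$ means pinning down the correct spectral interval for $W$ and checking the monotonicity and sign hypotheses there. In particular one must track the \emph{orientation} of the interval $[f^{M-m}(m),f^{M-m}(M)]$, which depends on whether $f(M)\geq f(m)$, so that the constants $a_{h_1},b_{h_1},\alpha_1,\beta_1$ carry the signs the proposition requires; verifying $\alpha_1>0$ (immediate since $h_1$ is increasing) then closes the argument.
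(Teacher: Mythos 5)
Your proposal is correct and follows essentially the same route as the paper's proof: the first inequality via functional calculus on \eqref{1st_ineq_log_conv} at $\varphi(A)$ (the third inequality of Proposition \ref{prop_1st_hamid}), the second by feeding the convex monotone function $h$ into \eqref{22nd_ineq_pos_map}, and the third branch via operator concavity of $t^{1/(M-m)}$ with \eqref{oper_conc_intro} when $M-m\geq 1$, respectively \eqref{2ndd_ineq_pos_map} applied to $h_1$ on the spectral interval of $W$ when $M-m<1$. Your added observations — that $h(t)=C\rho^{t}$ is a genuine exponential (making convexity and strict monotonicity transparent and exposing the degenerate case $f(m)=f(M)$, where $\alpha$ is undefined), and that the orientation of the interval $[f^{M-m}(m),f^{M-m}(M)]$ depends on the sign of $f(M)-f(m)$ — are careful refinements the paper glosses over, not a different argument.
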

\begin{proof}
For $h(t)=\left(f^{t-m}(M)f^{M-t}(m)\right)^{\frac{1}{M-m}}$, we clearly see that $h$ is convex  and monotone on $[m,M]$. Notice that
\begin{align*}
f(\varphi(A))&\leq \left(f^{\varphi(A)-m}(M)f^{M-\varphi(A)}(m)\right)^{\frac{1}{M-m}} \quad({\text{by\;the\;third\;inequality\;of\;Proposition}}\;\ref{prop_1st_hamid})\\
&=h(\varphi(A))\\
&\leq \alpha(h,t_0)\;\varphi(h(A))+\beta(h,t_0)\quad({\text{by}}\;\eqref{22nd_ineq_pos_map})\\
&=\alpha(h,t_0)\;\varphi\left(f^{\frac{A-m}{M-m}}(M)f^{\frac{M-A}{M-m}}(m)\right)+\beta(h,t_0).
\end{align*}
This proves the first two inequalities. Now, for the third inequality, assume that $M-m\geq 1$ and let $h_1(t)=t^{\frac{1}{M-m}}.$ Then the second inequality can be viewed as
\begin{equation}\label{needed_2nd_prop}
f(\varphi(A))\leq \alpha\; \varphi\left(h_1\left(f^{A-m}(M)f^{M-A}(m)\right)\right)+\beta.
\end{equation}
Since $M-m\geq 1,$ it follows that $h_1$ is operator concave. Therefore, noting \eqref{needed_2nd_prop} and \eqref{oper_conc_intro}, we have
\begin{align*}
f(\varphi(A))&\leq \alpha\; \varphi\left(h_1\left(f^{A-m}(M)f^{M-A}(m)\right)\right)+\beta\\
&\leq \alpha\;h_1\left(\varphi\left(f^{A-m}(M)f^{M-A}(m)\right)\right)+\beta,
\end{align*}
which is the desired inequality in the case $M-m\geq 1.$\\
Now, if $ M-m< 1,$ the function $h_1$ is  convex and monotone. Therefore, taking in account  \eqref{needed_2nd_prop} and \eqref{2ndd_ineq_pos_map}, we obtain
\begin{align*}
f(\varphi(A))&\leq \alpha\; \varphi\left(h_1\left(f^{A-m}(M)f^{M-A}(m)\right)\right)+\beta\\
&\leq \alpha \left(\alpha_1\;h_1\left(\varphi\left(f^{A-m}(M)f^{M-A}(m)\right)\right)+\beta_1\right)+\beta\\
&=\alpha\;\alpha_1\left(\varphi\left(f^{A-m}(M)f^{M-A}(m)\right)\right)^{\frac{1}{M-m}}+\alpha\beta_1+\beta,
\end{align*}
which completes the proof.
\end{proof}

For the same parameters as Theorem \ref{theorem_1}, we have the following comparison too, in which the first two inequalities have been shown in Theorem \ref{theorem_1}.
\begin{corollary}\label{cor_thm_1}
Let $f:[m,M]\to\mathbb{R}^+$ be log-convex, $A\in\mathcal{B}_{[m,M]}^{+}(\mathcal{H})$ and $\varphi$ be a normalized positive linear map. Then 
\begin{align*}
f(\varphi(A))&\leq \left(f^{\varphi(A)-m}(M)f^{M-\varphi(A)}(m)\right)^{\frac{1}{M-m}}\\
&\leq \alpha\;\varphi\left(f^{\frac{A-m}{M-m}}(M)f^{\frac{M-A}{M-m}}(m)\right)+\beta\\
&\leq \alpha\;\mu(m,M,f) \;\varphi(f(A))+\beta.
\end{align*}
\end{corollary}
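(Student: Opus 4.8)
The first two inequalities are precisely the first two inequalities of Theorem \ref{theorem_1} (after observing that the middle operator $\varphi(f^{\frac{A-m}{M-m}}(M)f^{\frac{M-A}{M-m}}(m))$ is exactly $\varphi(h(A))$ for $h(t)=(f^{t-m}(M)f^{M-t}(m))^{\frac{1}{M-m}}$), so the only new content is the terminal estimate bounding $\alpha\,\varphi(h(A))+\beta$ by $\alpha\,\mu(m,M,f)\,\varphi(f(A))+\beta$. My plan is to reduce this to a pointwise scalar inequality and then promote it to operators through the functional calculus and the positivity of $\varphi$.

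First I would record the scalar inequality $h(t)\le \mu(m,M,f)\,f(t)$ for every $t\in[m,M]$. This follows by chaining two facts already in hand: the second inequality in \eqref{1st_ineq_log_conv} gives $h(t)\le L[m,M,f](t)$, while the very definition of $\mu(m,M,f)$ as the maximum of $L[m,M,f](t)/f(t)$ over $[m,M]$ gives $L[m,M,f](t)\le \mu(m,M,f)\,f(t)$. Since $f>0$ throughout, there is no sign issue in these manipulations and the two bounds compose directly.

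Next, applying the standard functional calculus to $A$ (whose spectrum lies in $[m,M]$) turns this scalar inequality into the operator inequality $h(A)\le \mu(m,M,f)\,f(A)$, and applying the positive linear map $\varphi$ preserves it, yielding $\varphi(h(A))\le \mu(m,M,f)\,\varphi(f(A))$. Because $h$ is convex and monotone on $[m,M]$, the constant $\alpha=\alpha(h,t_0)=a_h/h'(t_0)$ is strictly positive (as $a_h$ and $h'(t_0)$ share the same sign, exactly as in the proof of Proposition \ref{2nd_prop_pos_map}), so multiplying by $\alpha$ and adding $\beta$ keeps the direction of the inequality. Combining with the second inequality of Theorem \ref{theorem_1}, which reads $f(\varphi(A))\le h(\varphi(A))\le \alpha\,\varphi(h(A))+\beta$, then chains to the asserted bound.

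I do not anticipate a genuine obstacle; the only point requiring a little care is the strict positivity of $\alpha$, which is what guarantees that the final multiplication step does not reverse the inequality. Everything else is a routine transfer of a pointwise estimate across the functional calculus and a positive linear map.
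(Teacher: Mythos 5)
Your proposal is correct and takes essentially the same route as the paper: the paper's proof obtains the terminal bound by invoking the fourth inequality of Proposition \ref{prop_1st_hamid} with the identity map $\psi(X)=X$, which is precisely the operator inequality $h(A)\le \mu(m,M,f)\,f(A)$ that you re-derive from the scalar chain $h(t)\le L[m,M,f](t)\le \mu(m,M,f)\,f(t)$ before applying $\varphi$ and multiplying by $\alpha>0$. The only difference is that you unwind the paper's citation into its underlying pointwise argument; the mathematical content is identical.
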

\begin{proof}
We prove the last inequality. Letting $\psi(X)=X$ be a normalized positive linear map and noting that $\varphi$ is order preserving, the fourth inequality of proposition \ref{prop_1st_hamid} implies
\begin{align*}
\alpha\;\varphi\left(f^{\frac{A-m}{M-m}}(M)f^{\frac{M-A}{M-m}}(m)\right)+\beta&=\alpha\;\varphi\left(f^{\frac{\psi(A)-m}{M-m}}(M)f^{\frac{M-\psi(A)}{M-m}}(m)\right)+\beta\\
&\leq \alpha\;\mu(m,M,f)\;\varphi\left( \psi(f(A)) \right)+\beta\\
&=\alpha\;\mu(m,M,f)\;\varphi\left( f(A) \right)+\beta,
\end{align*}
which is the desired inequality.
\end{proof}

For the next result, the following constants will be used.\\
$\hat{a}_h=a[m,M,\hat{h}], \hat{b}_h=b[m,M,\hat{h}], \hat{\alpha}=\hat{\alpha}(\hat{h},t_0), \hat{\beta}=\hat{\beta}(\hat{h},t_0)$ for $\hat{h}(t)=f^{t-m}(M)f^{M-t}(m)$ and $a_{h_1}=a[f^{M-m}(m),f^{M-m}(M),h_1], b_{h_1}=b[f^{M-m}(m),f^{M-m}(M),h_1], \alpha_1=\alpha(h_1,t_1)$ and $\beta_1=\beta(h_1,t_1)$ for $h_1(t)=t^{\frac{1}{M-m}}.$
\begin{theorem}\label{theorem_2}
Let $f:[m,M]\to\mathbb{R}^+$ be log-convex, $t_0,t_1\in (m,M)$, $A\in\mathcal{B}_{[m,M]}^{+}(\mathcal{H})$ and $\varphi$ be a normalized positive linear map. If $M-m\geq 1,$
\begin{align*}
\varphi(f(A))&\leq \left[\varphi\left(f^{A-m}(M)f^{M-A}(m)\right)\right]^{\frac{1}{M-m}}\\
&\leq \left(\hat{\alpha}\;f^{\varphi(A)-m}(M)f^{M-\varphi(A)}(m)+\hat{\beta}\right)^{\frac{1}{M-m}},
\end{align*}

On the other hand, if $M-m< 1,$
\begin{align*}
\varphi(f(A))&\leq \alpha_1\left[\varphi\left(f^{M-A}(m)f^{A-m}(M)\right) \right]^{\frac{1}{M-m}}+\beta_1\\
&\leq \alpha_1\left[\hat{\alpha}\;f^{\varphi(A)-m}(M)f^{M-\varphi(A)}(m)+\hat{\beta} \right]^{\frac{1}{M-m}}+\beta_1.
\end{align*}
\end{theorem}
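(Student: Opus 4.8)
The plan is to read the right-hand side of the log-convex estimate \eqref{1st_ineq_log_conv} as a composition of two functions and to treat its outer and inner factors by different tools. First I would record that
$$\hat{h}(t)=f^{t-m}(M)f^{M-t}(m)=\exp\bigl[(t-m)\log f(M)+(M-t)\log f(m)\bigr]$$
is the exponential of an affine function of $t$, hence convex and monotone on $[m,M]$, with $\hat{h}(m)=f^{M-m}(m)$ and $\hat{h}(M)=f^{M-m}(M)$. Consequently $h_1(t)=t^{\frac{1}{M-m}}$ is applied exactly on the interval $[f^{M-m}(m),f^{M-m}(M)]$, which is precisely why the constants $a_{h_1},b_{h_1},\alpha_1,\beta_1$ are defined there. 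Applying a standard functional calculus to \eqref{1st_ineq_log_conv} with $t=A$ and then the positive map $\varphi$ gives the starting estimate
\[
\varphi(f(A))\leq \varphi\bigl(h_1(\hat{h}(A))\bigr).
\]

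Everything then hinges on the position of $\frac{1}{M-m}$ relative to $1$. When $M-m\geq 1$, the exponent satisfies $0<\frac{1}{M-m}\leq 1$, so $h_1$ is operator concave; hence \eqref{oper_conc_intro} in its reversed (concave) form yields $\varphi(h_1(\hat{h}(A)))\leq h_1(\varphi(\hat{h}(A)))=[\varphi(\hat{h}(A))]^{\frac{1}{M-m}}$, which is exactly the first displayed inequality. For the second inequality I would apply Proposition \ref{2nd_prop_pos_map} to the convex monotone function $\hat{h}$ (via \eqref{2ndd_ineq_pos_map}) to obtain $\varphi(\hat{h}(A))\leq \hat{\alpha}\,\hat{h}(\varphi(A))+\hat{\beta}$, and then raise both sides to the power $\frac{1}{M-m}$; since $x\mapsto x^{\frac{1}{M-m}}$ is increasing, the inequality is preserved, and $\hat{h}(\varphi(A))=f^{\varphi(A)-m}(M)f^{M-\varphi(A)}(m)$ delivers the claimed bound.

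When $M-m<1$, the exponent exceeds $1$ and $h_1$ is no longer operator concave—only convex and increasing as a scalar function—so the operator inequality \eqref{oper_conc_intro} is unavailable, and I would instead invoke the Mond--Pe\v{c}ari\'{c}-type Proposition \ref{2nd_prop_pos_map} directly for $h_1$. This gives $\varphi(h_1(\hat{h}(A)))\leq \alpha_1[\varphi(\hat{h}(A))]^{\frac{1}{M-m}}+\beta_1$, which is the first inequality of the second case. For the last inequality I would again use $\varphi(\hat{h}(A))\leq \hat{\alpha}\,\hat{h}(\varphi(A))+\hat{\beta}$ from Proposition \ref{2nd_prop_pos_map}, apply the increasing map $x\mapsto x^{\frac{1}{M-m}}$, and finally multiply by $\alpha_1$ and add $\beta_1$; the one point that must be checked is $\alpha_1>0$, which holds because $h_1$ is increasing, so that multiplying by $\alpha_1$ preserves the inequality.

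The main obstacle is conceptual rather than computational: recognizing that the single case distinction between $M-m\geq 1$ and $M-m<1$ forces two genuinely different mechanisms—an operator (L\"owner--Heinz) argument when $h_1$ is operator concave, versus the scalar Mond--Pe\v{c}ari\'{c} estimate of Proposition \ref{2nd_prop_pos_map} when $h_1$ is merely convex—whereas the treatment of the outer factor $\hat{h}$ is uniform across both cases. Care is also needed to confirm that $\hat{h}$ is convex and monotone with $\hat{h}'(t_0)\neq 0$ (equivalently $f(M)\neq f(m)$, which is implicit in the definition of $\hat{\alpha}$) and that $h_1$ acts on $[f^{M-m}(m),f^{M-m}(M)]$, so that $\alpha_1,\beta_1$ are indeed the constants specified before the theorem.
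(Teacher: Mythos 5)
Your proposal matches the paper's proof essentially step for step: the opening bound $\varphi(f(A))\le\varphi\bigl(h_1(\hat{h}(A))\bigr)$ is exactly the first inequality of Proposition \ref{prop_1st_hamid} (which the paper cites where you rederive it from \eqref{1st_ineq_log_conv}), the case $M-m\geq 1$ is handled via operator concavity of $h_1$ followed by \eqref{2ndd_ineq_pos_map} applied to $\hat{h}$, and the case $M-m<1$ via \eqref{2ndd_ineq_pos_map} applied first to $h_1$ and then to $\hat{h}$, precisely as in the paper. Your side remarks---that the power step rests on L\"owner--Heinz rather than mere scalar monotonicity, that $\hat{h}'(t_0)\neq 0$ (i.e.\ $f(m)\neq f(M)$) is needed for $\hat{\alpha}$, and that $h_1$'s constants live on $[f^{M-m}(m),f^{M-m}(M)]$---are correct clarifications of points the paper leaves implicit.
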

\begin{proof}
Letting $h_1(t)=t^{\frac{1}{M-m}}$ and $\hat{h}(t)=f^{t-m}(M)f^{M-t}(m),$ we have
\begin{align*}
\varphi(f(A))&\leq \varphi\left(h_1\left(f^{A-m}(M)f^{M-A}(m)\right)\right)\quad({\text{by\;the\;first\;inequality\;of\;Proposition}}\;\ref{prop_1st_hamid})\\
&\leq h_1\left(\varphi\left(f^{A-m}(M)f^{M-A}(m)\right)\right)\quad({\text{since}}\;h_1\;{\text{is\;operator\;concave}})\\
&= \left[\varphi\left(f^{A-m}(M)f^{M-A}(m)\right)\right]^{\frac{1}{M-m}}\\
&=\left[\varphi(\hat{h}(A))\right]^{\frac{1}{M-m}}\quad({\text{where}}\;\hat{h}(t)=f^{t-m}(M)f^{M-t}(m))\\
&\leq \left[\hat{\alpha}\;\hat{h}(\varphi(A))+\beta\right]^{\frac{1}{M-m}}\quad({\text{by}}\;\eqref{2ndd_ineq_pos_map})\\
&=\left(\hat{\alpha}\;f^{\varphi(A)-m}(M)f^{M-\varphi(A)}(m)+\hat{\beta}\right)^{\frac{1}{M-m}},
\end{align*}
which completes the proof for the case $M-m\geq 1.$

Now if $M-m<1$, we have
\begin{align*}
\varphi(f(A))&\leq \varphi\left(h_1\left(f^{A-m}(M)f^{M-A}(m)\right)\right)\quad \;({\text{by\;the\;first\;inequality\;of\;Proposition}}\;\ref{prop_1st_hamid})\\
&\leq \alpha_1\;h_1\left(\varphi\left(f^{A-m}(M)f^{M-A}(m)\right)\right)+\beta_1\quad ({\text{by}}\;\eqref{2ndd_ineq_pos_map})\\
&=\alpha_1\;\left[\varphi\left(f^{A-m}(M)f^{M-A}(m)\right)\right]^{\frac{1}{M-m}}+\beta_1\\
&=\alpha_1\left[\varphi(\hat{h}(A))\right]^{\frac{1}{M-m}}+\beta_1\\
&\leq \alpha_1\left[\hat{\alpha}\;\hat{h}(\varphi(A))+\hat{\beta}\right]^{\frac{1}{M-m}}+\beta_1\quad ({\text{by}}\;\eqref{2ndd_ineq_pos_map})\\
&=\alpha_1\left[\hat{\alpha}\;f^{\varphi(A)-m}(M)f^{M-\varphi(A)}(m)+\hat{\beta} \right]^{\frac{1}{M-m}}+\beta_1,
\end{align*}
which completes the proof.
\end{proof}

\begin{remark}
In both Theorems \ref{theorem_1} and \ref{theorem_2}, the constants $\alpha$ and $\alpha_1$ can be selected to be 1, as follows. Noting that the function $h$ in both theorems is continuous on $[m,M]$ and differentiable on $(m,M)$, the mean value theorem assures that $a_h=h'(t_0)$  for some $t_0\in (m,M).$ This implies $\alpha=1$, since we use the notation $\alpha \equiv \alpha(h,t_0) = \frac{a_h}{h'(t_0)}$. A similar argument applies for $h_1.$ These values of $t_0$ can be easily found.\\
Moreover, one can find $t_0$ so that $\beta(h,t_0)=0,$ providing a multiplicative version. Since this is a direct application, we leave the tedious computations to the interested reader. 
\end{remark}

Utilizing Lemma \ref{lemma_log_conv}, we obtain the following exponential inequality.
\begin{proposition}\label{e_prop_oper}
Let $f:[m,M]\to\mathbb{R}^+$ be log-convex, $t_0,t_1\in (m,M)$, $A\in\mathcal{B}_{[m,M]}^{+}(\mathcal{H})$ and $\varphi$ be a normalized positive linear map. Then
\begin{align*}
\varphi(f(A))&\geq \frac{f(t_0)}{\alpha}\exp\left[\frac{f'(t_0)}{f(t_0)}(\varphi(A)-t_0)\right]-\frac{\beta}{\alpha}f(t_0),
\end{align*}
and 
\begin{align*}
f(\varphi(A))&\geq \frac{f(t_0)}{\alpha}\varphi\left(\exp\left[\frac{f'(t_0)}{f(t_0)}(A-t_0)\right]\right)-\frac{\beta}{\alpha}f(t_0),
\end{align*}
where $\alpha=\alpha(k,t_1)$ and $\beta=\beta(k,t_1)$ for $k(t)=\exp\left[\frac{f'(t_0)}{f(t_0)}(t-t_0)\right].$
\end{proposition}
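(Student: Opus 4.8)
The plan is to reduce everything to the single auxiliary function $k(t)=\exp\left[\frac{f'(t_0)}{f(t_0)}(t-t_0)\right]$, which is a genuine exponential and hence convex and strictly monotone on $[m,M]$, increasing or decreasing according to the sign of $f'(t_0)$ (nonzero, as is implicit in the definition of $\alpha$, and recall $f(t_0)>0$). This monotonicity is exactly the hypothesis needed to invoke Proposition \ref{2nd_prop_pos_map} for $k$, and, as in the proof of that proposition, it forces $a_k$ and $k'(t_1)$ to share a sign, so that $\alpha=\alpha(k,t_1)=a_k/k'(t_1)>0$. The positivity of $\alpha$ is the small fact that legitimizes dividing through by $\alpha$ later on without reversing inequalities, and it is essentially the only point requiring care.

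First I would record the pointwise bound from Lemma \ref{lemma_log_conv}, namely $f(t)\geq f(t_0)\,k(t)$ for all $t\in[m,M]$. Substituting $t=A$ through the functional calculus and applying the order-preserving map $\varphi$ gives $\varphi(f(A))\geq f(t_0)\,\varphi(k(A))$, while substituting $t=\varphi(A)$ gives $f(\varphi(A))\geq f(t_0)\,k(\varphi(A))$. These are the starting points for the first and second displayed inequalities, respectively.

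Next I would apply Proposition \ref{2nd_prop_pos_map} to the convex, monotone function $k$ with the parameter $t_1$, producing both \eqref{2ndd_ineq_pos_map} and \eqref{22nd_ineq_pos_map} for $k$: namely $\varphi(k(A))\leq \alpha\,k(\varphi(A))+\beta$ and $k(\varphi(A))\leq \alpha\,\varphi(k(A))+\beta$. Since $\alpha>0$, these rearrange into the lower bounds $\varphi(k(A))\geq \frac{1}{\alpha}\bigl(k(\varphi(A))-\beta\bigr)$ and $k(\varphi(A))\geq \frac{1}{\alpha}\bigl(\varphi(k(A))-\beta\bigr)$.

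To finish, I would splice these together. Combining $\varphi(f(A))\geq f(t_0)\,\varphi(k(A))$ with the first rearranged bound yields $\varphi(f(A))\geq \frac{f(t_0)}{\alpha}k(\varphi(A))-\frac{\beta}{\alpha}f(t_0)$, and recognizing $k(\varphi(A))=\exp\left[\frac{f'(t_0)}{f(t_0)}(\varphi(A)-t_0)\right]$ delivers the first assertion. Symmetrically, combining $f(\varphi(A))\geq f(t_0)\,k(\varphi(A))$ with the second rearranged bound, together with $\varphi(k(A))=\varphi\!\left(\exp\left[\frac{f'(t_0)}{f(t_0)}(A-t_0)\right]\right)$, yields the second assertion. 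The crossing pattern, in which the $\varphi(f(A))$ estimate uses \eqref{22nd_ineq_pos_map} while the $f(\varphi(A))$ estimate uses \eqref{2ndd_ineq_pos_map}, is dictated by whether we must trade $\varphi(k(A))$ for $k(\varphi(A))$ or the reverse, and is the one bookkeeping detail worth stating explicitly. No genuine obstacle arises: the argument is a direct assembly of Lemma \ref{lemma_log_conv} and Proposition \ref{2nd_prop_pos_map}.
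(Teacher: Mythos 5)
Your proof is correct and takes essentially the same route as the paper's: apply Lemma \ref{lemma_log_conv} with $t=A$ and $t=\varphi(A)$, then invoke Proposition \ref{2nd_prop_pos_map} for the convex monotone function $k$, with exactly the crossing pattern the paper uses (\eqref{22nd_ineq_pos_map} for the $\varphi(f(A))$ bound, \eqref{2ndd_ineq_pos_map} for the $f(\varphi(A))$ bound). Your explicit check that $\alpha(k,t_1)>0$ (so that dividing does not reverse the inequalities) is a detail the paper leaves implicit, but otherwise the arguments coincide.
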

\begin{proof}
By Lemma \ref{lemma_log_conv}, we have
$$f(t)\geq f(t_0)\exp\left[\frac{f'(t_0)}{f(t_0)}(t-t_0)\right],  m\leq t\leq M.$$ A functional calculus argument applied to this inequality with $t=A$ implies
\begin{align*}
\varphi(f(A))&\geq f(t_0)\;\varphi\left(\exp\left[\frac{f'(t_0)}{f(t_0)}(A-t_0)\right]\right)\\
&=f(t_0)\;\varphi(k(A))\\
&\geq f(t_0)\frac{k(\varphi(A))-\beta}{\alpha}\quad({\text{by}}\;\eqref{22nd_ineq_pos_map})\\
&=\frac{f(t_0)}{\alpha}\exp\left[\frac{f'(t_0)}{f(t_0)}(\varphi(A)-t_0)\right]-\frac{\beta}{\alpha}f(t_0),
\end{align*}
which completes the proof of the first inequality. The second inequality follows similarly using \eqref{2ndd_ineq_pos_map}.
\end{proof}

\section{Further refinements}
The above results are all based on basic inequalities for convex functions. Therefore, refinements of convex functions inequalities can be used to obtain sharper bounds. We give here some examples. In \cite{mitroi}, the following simple inequality was shown for the convex function $f:[m,M]\to\mathbb{R}$,
\begin{equation}\label{mitroi_ineq}
f(t)+\frac{2\min\{t-m,M-t\}}{M-m}\left(\frac{f(m)+f(M)}{2}-f\left(\frac{m+M}{2}\right)\right)\leq L[m,M,f](t).
\end{equation}
This inequality can be used to obtain refinements of \eqref{2ndd_ineq_pos_map} and \eqref{22nd_ineq_pos_map} as follows.
First, we note that the function $t\mapsto t_{\min}:=\frac{2\min\{t-m,M-t\}}{M-m}\left(\frac{f(m)+f(M)}{2}-f\left(\frac{m+M}{2}\right)\right)$ is a continuous function. Further, noting that $$\min\{t-m,M-t\}=\frac{M-m+|M+m-2t|}{2},$$ one can apply a functional calculus argument on \eqref{mitroi_ineq}. With this convention, we will use the notation
$$A_{\min}:=\frac{1}{M-m}\left(\frac{f(m)+f(M)}{2}-f\left(\frac{m+M}{2}\right)\right)\left(M-m+|M+m-A|\right).$$
The following is a refinement of of Proposition \ref{2nd_prop_pos_map}. Since the proof is similar to that of Proposition \ref{2nd_prop_pos_map} utilizing \eqref{mitroi_ineq}, we do not include it here.
\begin{proposition}
Let $f:[m,M]\to\mathbb{R}^+$ be convex, $A\in\mathcal{B}_{[m,M]}^{+}(\mathcal{H})$ and $\varphi$ be a normalized positive linear map. If $f$ is either increasing or decreasing on $[m,M],$ then for any $t_0\in (m,M),$
\begin{equation}\label{2ndd_ineq_pos_map_ref}
\varphi(f(A))+\varphi(A_{\min})\leq \alpha(f,t_0)f(\varphi(A))+\beta(f,t_0),
\end{equation}
and
\begin{equation}\label{22nd_ineq_pos_map_ref}
f(\varphi(A))+(\varphi(A))_{\min}\leq \alpha(f,t_0)\varphi(f(A))+\beta(f,t_0),
\end{equation}
provided that $f'(t_0)$ exists and $f'(t_0)\not=0.$
\end{proposition}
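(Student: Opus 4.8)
The plan is to mimic the proof of Proposition \ref{2nd_prop_pos_map}, replacing the crude upper bound $f(t)\le L[m,M,f](t)$ by the sharper Mitroi estimate \eqref{mitroi_ineq}, while keeping the tangent-line lower bound $L'[t_0,f](t)\le f(t)$ from \eqref{1st_ineq_for_conv} untouched. First I would record that monotonicity of $f$ forces $a_f$ and $f'(t_0)$ to share the same sign, so that $\alpha(f,t_0)=a_f/f'(t_0)>0$; this positivity is the only place the increasing/decreasing hypothesis enters, and it is exactly what permits multiplying operator inequalities by $\alpha$ without reversing them.

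Next I would turn \eqref{mitroi_ineq} into an operator inequality. Writing the Mitroi correction as the continuous function $t\mapsto t_{\min}$ (continuity being guaranteed by the $\min\{t-m,M-t\}$/absolute-value representation noted before the statement), the scalar inequality \eqref{mitroi_ineq} reads $f(t)+t_{\min}\le a_f\,t+b_f$. A standard functional calculus argument then yields, for the operator $A$, the relation $f(A)+A_{\min}\le a_f\,A+b_f I$; applying the normalized positive linear map $\varphi$ and using linearity together with $\varphi(I)=I$ gives
\[
\varphi(f(A))+\varphi(A_{\min})\le a_f\,\varphi(A)+b_f .
\]
Independently, applying the same functional calculus to \eqref{mitroi_ineq} at the single operator $\varphi(A)$ produces
\[
f(\varphi(A))+(\varphi(A))_{\min}\le a_f\,\varphi(A)+b_f .
\]
These two refined upper bounds are the only new ingredients compared with the two displays \eqref{needed_ineq_1} and \eqref{needed_ineq_2} appearing in the proof of Proposition \ref{2nd_prop_pos_map}.

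It remains to convert the common right-hand side $a_f\,\varphi(A)+b_f$ into $\alpha(f,t_0)f(\varphi(A))+\beta(f,t_0)$ in the first case and into $\alpha(f,t_0)\varphi(f(A))+\beta(f,t_0)$ in the second. For this I would use the tangent-line bound $f(t_0)+f'(t_0)(t-t_0)\le f(t)$ from \eqref{1st_ineq_for_conv}: evaluating it at $t=\varphi(A)$ gives $f(t_0)+f'(t_0)(\varphi(A)-t_0)\le f(\varphi(A))$, while evaluating it at $t=A$ and then applying $\varphi$ gives $f(t_0)+f'(t_0)(\varphi(A)-t_0)\le\varphi(f(A))$. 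Multiplying either by $\alpha=\alpha(f,t_0)>0$ and rearranging, via $\alpha f'(t_0)=a_f$ and $\beta(f,t_0)=a_f t_0+b_f-\alpha f(t_0)$, yields $a_f\,\varphi(A)+b_f\le\alpha f(\varphi(A))+\beta$ and $a_f\,\varphi(A)+b_f\le\alpha\varphi(f(A))+\beta$ respectively. Chaining each with the matching refined upper bound from the previous paragraph produces \eqref{2ndd_ineq_pos_map_ref} and \eqref{22nd_ineq_pos_map_ref}.

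I expect the only genuine subtlety to be the functional-calculus treatment of the correction term: one must be sure that $t\mapsto t_{\min}$ is a legitimate continuous function of $t$ on $[m,M]$, so that $A_{\min}$ and $(\varphi(A))_{\min}$ are well-defined self-adjoint operators and \eqref{mitroi_ineq} lifts to a bona fide operator inequality. Since $\min\{t-m,M-t\}$ is continuous and the convexity defect $\tfrac{f(m)+f(M)}{2}-f(\tfrac{m+M}{2})$ is a nonnegative scalar, this presents no real difficulty; moreover the added terms are positive operators, which is precisely what makes the statement a genuine refinement of Proposition \ref{2nd_prop_pos_map}. Everything else is the sign-tracking already performed there.
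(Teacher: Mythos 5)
Your proposal is correct and takes exactly the route the paper intends: the paper omits this proof, remarking only that it is similar to that of Proposition \ref{2nd_prop_pos_map} utilizing \eqref{mitroi_ineq}, and your argument---replacing the chord upper bound in \eqref{needed_ineq_1} and \eqref{needed_ineq_2} by the Mitroi estimate $f(t)+t_{\min}\le a_f t+b_f$ while keeping the tangent-line lower bound and the positivity of $\alpha(f,t_0)$ from the monotonicity hypothesis---is precisely that proof. No gaps.
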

Notice that applying this refinement to the convex function $f(t)=t^{-1}$ implies refinements of both inequalities in Corollary \ref{cor_inverse} as follows.
\begin{corollary}
Under the assumptions of Corollary  \ref{cor_inverse}, we have
\begin{equation}\label{additive_reverse_inverse_ref}
\varphi(A^{-1})+\varphi(A_{\min})\leq (\varphi(A))^{-1}+\left(\frac{1}{\sqrt{m}}-\frac{1}{\sqrt{M}}\right)^{2}
\end{equation}
and
\begin{equation}\label{multi_reverse_inverse_ref}
\varphi(A^{-1})+(\varphi(A))_{\min}\leq \frac{(M+m)^2}{4mM}(\varphi(A))^{-1}.
\end{equation}
\end{corollary}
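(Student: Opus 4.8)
The plan is to specialize the refined Proposition preceding the corollary, namely inequalities \eqref{2ndd_ineq_pos_map_ref} and \eqref{22nd_ineq_pos_map_ref}, to the convex function $f(t)=t^{-1}$ on $[m,M]$, mirroring the proof of Corollary \ref{cor_inverse} verbatim but starting from the refined estimates rather than from \eqref{2ndd_ineq_pos_map} and \eqref{22nd_ineq_pos_map}. Since $f(t)=t^{-1}$ is convex and strictly decreasing on $[m,M]$, with $f'(t_0)=-t_0^{-2}\neq 0$ for every $t_0\in(m,M)$, all the hypotheses of the refined Proposition are satisfied, and the constants $\alpha(f,t_0)$ and $\beta(f,t_0)$ are precisely the ones already computed in the proof of Corollary \ref{cor_inverse}. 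No new analytic input is needed; the sharpening over Corollary \ref{cor_inverse} is supplied entirely by the Mitroi-type inequality \eqref{mitroi_ineq} that is already built into the refined Proposition.

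For the first inequality I would take $t_0=\sqrt{mM}\in(m,M)$ in \eqref{2ndd_ineq_pos_map_ref}. The calculations recorded in Corollary \ref{cor_inverse} give $\alpha(f,t_0)=1$ and $\beta(f,t_0)=\left(\frac{1}{\sqrt m}-\frac{1}{\sqrt M}\right)^{2}$, so \eqref{2ndd_ineq_pos_map_ref} becomes $\varphi(A^{-1})+\varphi(A_{\min})\le(\varphi(A))^{-1}+\left(\frac{1}{\sqrt m}-\frac{1}{\sqrt M}\right)^{2}$, which is exactly \eqref{additive_reverse_inverse_ref}. For the second inequality I would instead take $t_0=\frac{m+M}{2}$, for which the same computations yield $\alpha(f,t_0)=\frac{(M+m)^2}{4mM}$ and $\beta(f,t_0)=0$; substituting these into the refined inequality produces the multiplicative refinement \eqref{multi_reverse_inverse_ref}, with the refinement term carried unchanged to the left-hand side.

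There is essentially no obstacle of analytic substance: the whole argument is the substitution $f(t)=t^{-1}$ together with two evaluations of $\alpha(f,\cdot)$ and $\beta(f,\cdot)$ identical to those in Corollary \ref{cor_inverse}. The only point at which I would slow down is the bookkeeping of the two distinct refinement terms, $\varphi(A_{\min})$ and $(\varphi(A))_{\min}$: one must check that each inequality inherits the correct term from its source (with \eqref{2ndd_ineq_pos_map_ref} contributing $\varphi(A_{\min})$ and \eqref{22nd_ineq_pos_map_ref} contributing $(\varphi(A))_{\min}$), and that the common factor $\frac{f(m)+f(M)}{2}-f\!\left(\frac{m+M}{2}\right)$ entering $A_{\min}$ is nonnegative by convexity, so that the added term is genuinely a refinement and not a weakening of Corollary \ref{cor_inverse}.
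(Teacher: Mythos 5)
Your derivation of \eqref{additive_reverse_inverse_ref} is correct and coincides with the paper's intended argument (the paper gives no explicit proof, stating only that the corollary follows by applying the refined Proposition to $f(t)=t^{-1}$): specializing \eqref{2ndd_ineq_pos_map_ref} at $t_0=\sqrt{mM}$ with $\alpha(f,t_0)=1$, $\beta(f,t_0)=\left(\frac{1}{\sqrt m}-\frac{1}{\sqrt M}\right)^{2}$ gives exactly the first inequality, refinement term $\varphi(A_{\min})$ included.

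The second half of your proposal has a genuine gap, located precisely at the bookkeeping point you flagged but did not actually verify. You assign \eqref{multi_reverse_inverse_ref} to the source \eqref{22nd_ineq_pos_map_ref} because that is where the term $(\varphi(A))_{\min}$ originates; but with $f(t)=t^{-1}$ and $t_0=\frac{m+M}{2}$, inequality \eqref{22nd_ineq_pos_map_ref} reads
\[
(\varphi(A))^{-1}+(\varphi(A))_{\min}\leq \frac{(M+m)^2}{4mM}\,\varphi(A^{-1}),
\]
in which $\varphi(A^{-1})$ and $(\varphi(A))^{-1}$ sit on the opposite sides from \eqref{multi_reverse_inverse_ref}; no substitution converts one into the other. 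If you instead follow the route that parallels the paper's proof of Corollary \ref{cor_inverse} (where \emph{both} parts come from \eqref{2ndd_ineq_pos_map}), i.e.\ use \eqref{2ndd_ineq_pos_map_ref} with $t_0=\frac{m+M}{2}$, you obtain $\varphi(A^{-1})+\varphi(A_{\min})\leq \frac{(M+m)^2}{4mM}(\varphi(A))^{-1}$ --- the refinement term is $\varphi(A_{\min})$, not $(\varphi(A))_{\min}$. The distinction is not cosmetic: \eqref{multi_reverse_inverse_ref} as printed is false. Take $A=\operatorname{diag}(m,M)$ on $\mathbb{C}^{2}$ and $\varphi$ the normalized trace. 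Then $\varphi(A^{-1})=\frac{m+M}{2mM}=\frac{(M+m)^2}{4mM}(\varphi(A))^{-1}$, the equality case of the Kantorovich inequality, while $\varphi(A)=\frac{m+M}{2}$ yields $(\varphi(A))_{\min}=\frac{1}{2}\left(\frac1m+\frac1M\right)-\frac{2}{m+M}=\frac{(M-m)^2}{2mM(m+M)}>0$, so the left-hand side of \eqref{multi_reverse_inverse_ref} strictly exceeds the right-hand side. (Here $\varphi(A_{\min})=0$, since the spectrum of $A$ sits at the endpoints, so the $\varphi(A_{\min})$ version survives this test.) The provable statements are therefore $\varphi(A^{-1})+\varphi(A_{\min})\leq \frac{(M+m)^2}{4mM}(\varphi(A))^{-1}$ and the displayed reverse-type inequality above; the corollary as printed needs this correction, and your argument, which reproduces the printed term $(\varphi(A))_{\min}$ without checking which of the two refined inequalities actually delivers it, does not close.
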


\begin{remark}
The inequality \eqref{mitroi_ineq} has been studied extensively in the literature, where numerous refining terms have been found. We refer the reader to \cite{sab_mia} and \cite{sab_mjom}, where a comprehensive discussion has been made therein. These refinements then can be used to obtain further refining terms for Proposition \ref{2nd_prop_pos_map}.\\
Further, these refinements can be applied to log-convex functions too. This refining approach leads to refinements of most inequalities presented in this article; where convexity was the key idea. We leave the detailed computations to the interested reader.
\end{remark}

\section*{Data Availability}
All data generated or analysed during this study are included in this published article.
There is no experimental data in this article.

\section*{Acknowledgment}
The authors thank anonymous referees for giving valuable comments and suggestions to improve our manuscript. 
The work of the first author (M. S.) is supported by a sabbatical leave from Princess Sumaya University for technology. The author (S.F.) was partially supported by JSPS KAKENHI Grant Number 16K05257.

\vskip 0.8 true cm 

{\tiny (Mohammad Sababheh) Department of Basic Sciences, Princess Sumaya University for Technology, Amman, 	Jordan.

	\textit{E-mail address:} sababheh@yahoo.com, sababheh@psut.edu.jo 

\vskip 0.3 true cm 	
	
{\tiny (Hamid Reza Moradi) Department of Mathematics, Payame Noor Universtiy (PNU), P.O. BOX: 19395-4697, Tehran, Iran.}
	
	{\tiny \textit{E-mail address:} hrmoradi@mshdiau.ac.ir }
	
{\tiny \vskip 0.3 true cm }

{\tiny (Shigeru Furuichi) Department of Computer Science and System Analysis, College of Humanities and Sciences, Nihon University, 3-25-40, Sakurajyousui, Setagaya-ku, Tokyo 156-8550, Japan}
	
	{\tiny \textit{E-mail address:} furuichi@chs.nihon-u.ac.jp}
	
{\tiny \vskip 0.3 true cm }

\end{document}